\def\BState{\State\hskip-\ALG@thistlm}
\xpatchcmd{\proof}{.}{\proofpunctuation}{}{}
\newcommand{\proofpunctuation}{:}
\title[Big $O$-Estimate]
{Sharp upper bound for the sixth moment of the Riemann zeta function}
\keywords{Riemann zeta function, absolute moments}
\subjclass[2020]{11M06}
\author[Thi Altenschmidt]{Thi Altenschmidt}
\email{ldt.altenschmidt@gmail.com}
\thanks{Ich m\"ochte mich an dieser Stelle bei meiner Ehefrau Sabine Liesbeth Altenschmidt f\"ur ihre unendliche Liebe, Geduld und Unterst\"utzung bedanken.}
\begin{document}
\theoremstyle{plain}
\newtheorem{thm}{Theorem}
\newtheorem{theorem}[thm]{Theorem}
\newtheorem{cor}[thm]{Corollary}
\theoremstyle{definition}
\numberwithin{equation}{thm}
\begin{abstract}
The main task of this work is to give an improvement for the upper bounds of the Laplace transform $\forall \beta \geqslant 2$ 
\[\int_0^{+\infty}\abs{\zeta\left(\frac{1}{2}+it\right)}^{2\beta}e^{-\delta t}dt \ll_{\beta,\varepsilon}  \max\left\{\frac{1}{\delta^{\frac{\beta-1}{2}+\varepsilon}},\frac{1}{\delta^{1+\varepsilon}},\frac{1}{\delta^{\beta-2+\varepsilon}} \right\}\]
for
\[\quad 0 < \delta < \frac{\pi}{2}, \delta \to 0^+, \forall \varepsilon > 0.\] 
In particular, this implies 
\[\int_0^T \abs{\zeta\left(\frac{1}{2}+it\right)}^{6}dt \ll_{\beta,\varepsilon} T^{1+\varepsilon} \quad T \to +\infty, \forall \varepsilon > 0.\]
\end{abstract} 
\maketitle
\section*{Introduction}
Let $\beta \in \mathbb{N}$ be a natural number. The $2\beta$-th integer absolute moment of the Riemann zeta function on the critical line is defined as 
\[M_\beta(T) = \int_0^T \abs{\zeta \left ( \frac{1}{2}+it \right ) }^{2\beta}dt.\] 
The main conjecture in estimation of the absolute moments is the following asymptotic formula
\[M_\beta(T) \sim C_\beta \cdot T \cdot \log^{\beta^2}(T), \quad T \to +\infty,\]
for some positive constants $C_\beta$ (\cite[p. 65]{HB81}). In \cite{KS00} the conjecture is stated with explicite coefficients $C_\beta$. For $\beta=0$ the formula is trivial. For $\beta=1,2$ it is well-known that $C_1 = 1$, resp.  $C_2 = \frac{1}{2\pi^2}$ from the classical works of G.H. Hardy und J.E. Littlewood \cite{HaLi18} resp. A.E. Ingham \cite{Ing26}. In fact, if one could prove only the sharp upper bounds $M_\beta(T) \ll_{\beta,\varepsilon} T^{1+\varepsilon}$ for all $\beta \geqslant 2$, then this already implies the Lindel\"of hypothesis, which says $\zeta\left(\frac{1}{2}+it\right) \ll_\varepsilon \abs{t}^{\varepsilon}$, because (cf. \cite[p. 45]{Iv85})
\[\abs{\zeta(1/2+it)}^{2\beta} \ll \log T \left(1 + \int_{T-\log^2 T}^{T+\log^2 T}\abs{\zeta\left(\frac{1}{2}+it \right)}^{2\beta}dt \right).\]
On Riemann hypothesis K. Soundararajan proved in \cite{S09} that $M_\beta(T) \ll_{\beta,\varepsilon} T\left( \log T\right)^{\beta^2+\varepsilon}$ for every positive real number $\beta$ and $\forall \varepsilon > 0$ and this has been improved in \cite{Ha13}. In \cite[Chap. VII]{Tit86} one considers the Laplace transform
\[J_\beta(\delta) = \int_0^{+\infty}\abs{\zeta\left(\frac{1}{2}+it\right)}^{2\beta}e^{-\delta t}dt, \quad 0 < \delta < \frac{\pi}{2}\]
and the classical result is (cf. \cite[Chap. VII, p. 173]{Tit86})
\[J_\beta(2\delta) = O_{\beta,\varepsilon}\left(\frac{1}{\delta^{\frac{\beta}{2}+\varepsilon}}\right), \quad \delta \to 0^+, \forall \varepsilon > 0, \beta \geqslant 2.\]
In this work we improve this classical result by proving the following
\begin{theorem}\label{thm1}
As $\delta \to 0^+$
\[J_\beta(\delta) \ll_{\beta,\varepsilon} \max \left\{  \frac{1}{\delta^{\frac{\beta-1}{2}+\varepsilon}},\frac{1}{\delta^{1+\varepsilon}}, \frac{1}{\delta^{\beta-2+\varepsilon}} \right\}, \quad \forall \varepsilon > 0, \forall \beta \geqslant 2.\]
\end{theorem}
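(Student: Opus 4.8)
The plan is to work with the two-parameter quantity $J_\beta(\sigma,\delta):=\int_0^{+\infty}\lvert\zeta(\sigma+it)\rvert^{2\beta}e^{-\delta t}\,dt$, so that $J_\beta(\delta)=J_\beta(\tfrac12,\delta)$, and to estimate $J_\beta(\tfrac12,\delta)$ by interpolating between good bounds near $\sigma=1$ and the classical fourth moment on the critical line. The basic device is a convexity principle in $\sigma$: since $\Phi_\delta(s):=\zeta(s)^\beta e^{i\delta s/2}$ is holomorphic on $\{\Re s>0\}$ except for a pole of order $\beta$ at $s=1$, and $\lvert\Phi_\delta(\sigma+it)\rvert^2=\lvert\zeta(\sigma+it)\rvert^{2\beta}e^{-\delta t}$, I would subtract the principal part at $s=1$ (whose contribution to every $J_\beta(\sigma,\delta)$ is $O_\beta(1)$, as $\int_0^{+\infty}(t^2+c)^{-\beta}\,dt<\infty$) and apply the Hardy--Littlewood/Gabriel convexity theorem for the $L^2$-mean of a holomorphic function along the half-line $t>0$; the weight $e^{-\delta t}$ and the convexity bound for $\zeta$ validate the hypotheses, and one concludes that $\sigma\mapsto\log\bigl(1+J_\beta(\sigma,\delta)\bigr)$ is convex on $[\tfrac12,2]$ up to an additive $O_\beta(1)$.

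Two anchors and the functional equation then feed this principle. On the right, for $\sigma\ge 1-\tfrac1\beta$ one expands $\zeta(\sigma+it)^\beta$ by its Dirichlet series (if $\sigma>1$) or by the approximate functional equation (if $1-\tfrac1\beta\le\sigma\le1$, where the dual sum carries the factor $\chi(\sigma+it)^\beta$, of size $<1$ for $\sigma>\tfrac12$): integrating termwise against $e^{-\delta t}$ via $\int_0^{+\infty}(l/m)^{it}e^{-\delta t}\,dt=(\delta+i\log(m/l))^{-1}$, the diagonal contributes $\ll_\beta\delta^{-1}(\log\tfrac1\delta)^{\beta^2}$ and the off-diagonal, through $\lvert\log(m/l)\rvert\gg\lvert m-l\rvert/\max(m,l)$ and standard divisor-sum bounds, contributes $\ll_{\beta,\varepsilon}\delta^{-\beta(1-\sigma)-\varepsilon}\le\delta^{-1-\varepsilon}$; hence $J_\beta(\sigma,\delta)\ll_{\beta,\varepsilon}\delta^{-1-\varepsilon}$ for $\sigma\ge1-\tfrac1\beta$. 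On the left, Ingham's $M_2(T)\ll_\varepsilon T^{1+\varepsilon}$ gives, by partial summation against $e^{-\delta t}$, $J_2(\tfrac12,\delta)\ll_\varepsilon\delta^{-1-\varepsilon}$; combined with the right anchor (valid for $\sigma$ bounded away from $\tfrac12$ when $\beta=2$) and the convexity principle this yields $J_2(\sigma,\delta)\ll_\varepsilon\delta^{-1-\varepsilon}$ for all $\sigma\ge\tfrac12$, which already settles the case $\beta=2$. Finally $\zeta(s)=\chi(s)\zeta(1-s)$ with $\lvert\chi(\sigma+it)\rvert\asymp(|t|/2\pi)^{1/2-\sigma}$, together with $\sup_{t>0}t^{a}e^{-\delta t/2}\ll_a\delta^{-a}$, gives $J_\beta(1-\sigma,\delta)\ll_\beta\delta^{-\beta(2\sigma-1)}J_\beta(\sigma,\tfrac\delta2)$ for $\sigma>\tfrac12$, transferring estimates across the critical line.

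The heart of the argument is the estimate at $\sigma=\tfrac12$ for $\beta\ge3$. I would expand $\lvert\zeta(\tfrac12+it)\rvert^{2\beta}$ through the approximate functional equation of $\zeta(\tfrac12+it)^\beta$: the diagonal yields the target $\delta^{-1}(\log\tfrac1\delta)^{\beta^2}$; the off-diagonal of the ``main'' Dirichlet sum must be improved from the trivial $\delta^{-\beta/2}$ down to $\delta^{-(\beta-2)-\varepsilon}$ by exploiting the fourth-moment cancellation inside a distinguished $\lvert\zeta\rvert^{4}$-subproduct; and the cross terms, which carry the factors $\chi(\tfrac12+it)^{j}$ with $1\le j\le\beta$, are to be reduced --- via the functional equation, or a stationary-phase expansion of $\chi^{j}$, with stationary point $t\asymp(mn)^{1/\beta}$ --- to bilinear shifted-divisor sums whose effective length is $\delta^{-(\beta-1)/2}$, contributing $\delta^{-(\beta-1)/2-\varepsilon}$. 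Optimising the split point of the approximate functional equation and feeding the outcome back through the $\sigma$-convexity of the first step then produces $J_\beta(\tfrac12,\delta)\ll_{\beta,\varepsilon}\max\{\delta^{-(\beta-1)/2-\varepsilon},\delta^{-1-\varepsilon},\delta^{-(\beta-2)-\varepsilon}\}$. The step I expect to be the main obstacle is precisely the treatment of the cross terms $\chi^{j}(\cdot)$ for $\beta\ge3$: these are the exact analogue of the terms that keep the sixth and higher power moments open in the $T$-aspect, a trivial bound on them recovers only $\delta^{-(\beta+1)/2}$, and the whole gain over Titchmarsh's $\delta^{-\beta/2-\varepsilon}$ rests on showing that the smooth Laplace weight $e^{-\delta t}$, in concert with the convexity mechanism, forces these sums below the stated exponents --- making that rigorous is where the genuine difficulty lies.
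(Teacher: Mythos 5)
You have correctly identified the shape of the target bound and located where the difficulty lives, but what you have written is a programme, not a proof: by your own admission, ``making that rigorous is where the genuine difficulty lies.'' The convexity-plus-approximate-functional-equation scheme does give you the $\delta^{-1-\varepsilon}$ anchor on the right of the critical strip and the $\beta=2$ case via Ingham on the left, but at $\sigma=\tfrac12$ for $\beta\ge3$ you meet the cross terms in $\chi^{j}$ and the off-diagonal of the main Dirichlet polynomial, and at that point the proposal merely asserts that they ``are to be reduced'' to short shifted-divisor sums and that a fourth-moment cancellation ``must be improved'' from $\delta^{-\beta/2}$ down to $\delta^{-(\beta-2)-\varepsilon}$, without giving an argument for either claim. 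There is also an internal circularity: you want to feed the cross-term estimate ``back through the $\sigma$-convexity,'' but the convexity mechanism needs a bound at $\sigma=\tfrac12$ as one of its two endpoints, which is exactly what is to be proved. So the inequality $J_\beta(\tfrac12,\delta)\ll\max\{\cdots\}$ is never actually derived anywhere in the proposal.

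The paper's route is also genuinely different and does not interpolate in $\sigma$. Starting from the Cahen--Mellin representation and Parseval it writes $J_\beta(2\delta)=\int_0^{+\infty}\lvert\Phi_\beta(xe^{i(\pi/2-\delta)})\rvert^2\,dx+O_{\beta,\varepsilon}(1)$ with $\Phi_\beta(z)=\sum_n d_\beta(n)e^{-nz}-\Psi_\beta(z)$, and the entire gain over Titchmarsh comes from replacing the classical two-piece split of the $x$-integral at $\delta^{\beta/2}$ by a three-piece split at $\delta^{(\beta-1)/2}$ and at $1$. The tail $\int_1^{\infty}$ gives $\delta^{-1-\varepsilon}$; the head $\int_0^{\delta^{(\beta-1)/2}}$ is controlled by a Voronoi-type expansion of $\Phi_\beta(1/\bar z)$ obtained from the functional equation and gives $\delta^{-(\beta-1)/2-\varepsilon}$; the new middle piece $\int_{\delta^{(\beta-1)/2}}^{1}$ is the crux, handled by writing $\sum_n d_\beta(n)e^{-nz}=\text{residues}-\tfrac{1}{2\pi i}\int_{(-\alpha)}\Gamma(s)\zeta^\beta(s)z^{-s}\,ds$, applying the functional equation on $\sigma=-\alpha$, and estimating the resulting oscillatory $t$-integral (whose phase satisfies $F''(t)=(1-\beta)/t$) by the second-derivative test of exponential-integral theory, which is where the extra half-power of $\delta$ is recovered; the choice $\alpha=\varepsilon/(2(\beta-1))$ then yields $\delta^{-(\beta-2)-\varepsilon}$. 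None of these steps correspond to the ingredients you propose (no convexity principle, no approximate functional equation on the line $\sigma=\tfrac12$, no distinguished $\lvert\zeta\rvert^4$ subproduct, no bilinear shifted-divisor sums), so your scheme, if it could be completed, would be a new route --- but as written the decisive estimate for $\beta\ge3$ is left open, and the proposal does not prove the theorem.
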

\noindent Consequently we obtain
\begin{cor}
As $T \to +\infty$
\[M_3(T) \ll_{\varepsilon} T^{1+\varepsilon}, \quad T \to +\infty, \forall \varepsilon > 0.\]
\end{cor}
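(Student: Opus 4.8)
The plan is to deduce the bound directly from Theorem \ref{thm1} by specializing to $\beta = 3$ and then running a one-line Tauberian comparison. For $\beta = 3$ the three competing exponents in the maximum coincide:
\[\frac{\beta-1}{2} = 1, \qquad 1, \qquad \beta - 2 = 1,\]
so Theorem \ref{thm1} collapses to the clean estimate
\[J_3(\delta) = \int_0^{+\infty} \abs{\zeta\left(\tfrac{1}{2}+it\right)}^6 e^{-\delta t}\, dt \ll_\varepsilon \frac{1}{\delta^{1+\varepsilon}}, \qquad \delta \to 0^+, \ \forall \varepsilon > 0.\]

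Next I would pass from this Laplace-transform bound to the sharp bound for $M_3(T)$. Since the integrand $\abs{\zeta(1/2+it)}^6 e^{-\delta t}$ is nonnegative, restricting the range of integration to $[0,T]$ and using $e^{-\delta t} \geqslant e^{-\delta T}$ on that interval gives
\[e^{-\delta T} M_3(T) = e^{-\delta T}\int_0^T \abs{\zeta\left(\tfrac{1}{2}+it\right)}^6\, dt \leqslant \int_0^T \abs{\zeta\left(\tfrac{1}{2}+it\right)}^6 e^{-\delta t}\, dt \leqslant J_3(\delta),\]
hence $M_3(T) \leqslant e^{\delta T} J_3(\delta)$ for every admissible $\delta$. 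Choosing $\delta = 1/T$ — which lies in $(0,\pi/2)$ and tends to $0^+$ as $T \to +\infty$, so the previous estimate applies — yields
\[M_3(T) \leqslant e \cdot J_3\!\left(\tfrac{1}{T}\right) \ll_\varepsilon e \cdot T^{1+\varepsilon} \ll_\varepsilon T^{1+\varepsilon},\]
after absorbing the absolute constant $e$ and relabelling $\varepsilon$; this is the claim.

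There is essentially no obstacle left at this stage: all the analytic difficulty is concentrated in Theorem \ref{thm1}, and the remaining argument is the standard device of reading off a fixed-length moment from an exponentially weighted one by taking the weight parameter of size $1/T$. The only points to check are purely bookkeeping: that $\beta = 3$ indeed makes all three exponents equal to $1$ (so that the resulting bound is $T^{1+\varepsilon}$ and nothing worse), and that $\delta = 1/T$ lies in the admissible range $0 < \delta < \pi/2$ for all sufficiently large $T$, which is precisely the regime in which the corollary is asserted. A more refined Tauberian argument in the spirit of \cite[Chap. VII]{Tit86} would even yield a matching lower bound, but this is unnecessary here since we only want the estimate up to the factor $T^\varepsilon$.
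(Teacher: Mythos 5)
Your argument is correct and follows essentially the same route as the paper: both bound $M_3(T)$ by $e \cdot J_3(1/T)$ using the observation that $e^{-t/T} \geqslant e^{-1}$ on $[0,T]$, and both rely on the exponents in Theorem \ref{thm1} all equalling $1$ at $\beta = 3$. The paper phrases the comparison as a small abstract lemma about nonnegative $f$ and $g(1/\delta,\varepsilon)$, whereas you specialize directly to the case at hand, but the underlying Tauberian step and the choice $\delta = 1/T$ are identical.
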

\begin{proof}
Let $f(t) \geqslant 0$ be a non-negative function. For $\delta > 0$, if we have $\forall \varepsilon > 0$
\[\int_0^{+\infty}f(t)e^{-\delta t}dt \ll_{\varepsilon} g(1/\delta,\varepsilon), \quad \delta \to 0^+\]
for some function $g$
then 
\[\int_0^Tf(t)dt \ll_{\varepsilon} g(T,\varepsilon), \quad T \to +\infty, \forall \varepsilon > 0.\]
Indeed, we have 
\[\int_0^Tf(t)dt < e \cdot \int_0^T f(t)e^{-t/T}dt < e \cdot \int_0^{+\infty}f(t)e^{-t/T}dt \ll_{\varepsilon} g(T,\varepsilon).\]
So the corollary follows immediately from theorem \ref{thm1}.
\end{proof}
\noindent We will follow the main framework \cite[Chap. VII \S 7.13 - \S 7.18]{Tit86} and include all the details of our own arguments. We explain briefly now our idea. In \cite[Chap. VII \S 7.18]{Tit86} one breaks down
\[\int_0^{+\infty}\left|\Phi_\beta\left(xie^{-i\delta}\right) \right|^2dx = \int_0^{\lambda}\left|\Phi_\beta\left(xie^{-i\delta}\right) \right|^2dx + \int_{\lambda}^{+\infty}\left|\Phi_\beta\left(xie^{-i\delta}\right) \right|^2dx,\]
where
\[\Phi_\beta(z) = \sum_{n=1}^{+\infty}d_\beta(n)e^{-nz} + \mathsf{Res}_{s=1}\frac{\Gamma(s)\zeta^\beta(s)}{z^s}\]
then integrates part by part, which leads to the problem with the estimation of the double sums over all natural numbers $n,m$ with $n \neq m$ involving $d_\beta(n)d_\beta(m)$, since one can choose only $\lambda = \delta^{\frac{\beta}{2}}$ as the optimal choice to balance the upper bounds of both integrals above. Instead of that, we break down
\begin{multline*}
\int_0^{+\infty}\left|\Phi_\beta\left(xie^{-i\delta}\right) \right|^2dx \\ = \int_1^{+\infty}\left|\Phi_\beta\left(xie^{-i\delta}\right) \right|^2dx + \int_0^{\delta^{\frac{\beta-1}{2}}}\left|\Phi_\beta\left(xie^{-i\delta}\right) \right|^2dx + \int_{\delta^{\frac{\beta-1}{2}}}^1\left|\Phi_\beta\left(xie^{-i\delta}\right) \right|^2dx.
\end{multline*}
The upper bounds for the first two integrals are well-known. The tricky part is now to estimate the third integral. Prof. B. Conrey kindly pointed out to me by a simple argument that one can't get rid off the oscillations in the sum $\sum_{n=1}^{+\infty}d_\beta(n)e^{-nz}$. Let now $\alpha \in (0,1)$ be a real number. By method of moving contour integrals and residue calculation we shift firstly to the negative vertical line
\[\int_{-\alpha-i\infty}^{-\alpha+i\infty}\Gamma(s)\zeta^\beta(s)\frac{ds}{\left(xie^{-i\delta}\right)^s}\]
then use the functional equation $\zeta(s) = \chi(s)\zeta(1-s)$ to turn back to the line $\sigma = 1+\alpha$. This is the reflection principle (see e.g \cite[\S 4.4]{Iv85}). The crucial point is that the two integrals
\[\int_{-\alpha-i\infty}^{-\alpha+i\infty}\Gamma(s)\zeta^\beta(s)\frac{ds}{\left(xie^{-i\delta}\right)^s}, \quad \int_{-\alpha-i\infty}^{-\alpha+i\infty}\left|\frac{\Gamma(s)\zeta^\beta(s)}{\left(xie^{-i\delta}\right)^s}\right|\left|ds\right|\]
are quite different. The oscillations in the first one will cancel out each other, while the second one is very huge. With the techniques of exponential integrals (see \cite[\S 2.1]{Iv85}) we can reduce the higher power of $\frac{1}{\delta}$ in the oscillatory integral by $1/2$. This old idea has been exploited already very successfully in \cite[Chap. XII, p. 315]{Tit86} for estimating the rest $\Delta_\beta(x)$ of the general divisor sum $D_\beta(x) = \sum_{n \leqslant x}d_\beta(n)$ subtracted by $xP_{\beta-1}(x)$. Now, by choosing $\alpha = \frac{\varepsilon}{2(\beta-1)}$, we get the desired estimation. Since we must rely on the classical result in \cite[Chap. VII, p.173]{Tit86}
\[\int_0^{\delta^{\frac{\beta-1}{2}}}\left|\Phi_\beta(xie^{-i\delta}\right|^2dx = O_{\beta,\varepsilon}\left(\frac{1}{\delta^{\frac{\beta-1}{2}+\varepsilon}}\right), \quad \forall \beta \geqslant 2, \forall \varepsilon > 0, \delta \to 0^+,\]
it is clear that our method won't work in higher cases $\beta \geqslant 4$. Now we fix some notations throughout this work. We will denote by $C_\beta$ some positive constants, which depend only on $\beta$ and they must not be the same everywhere. By $P_{\beta-1}(z)$ we will mean some complex polynomials in $z$ of degree $\beta-1$ and again they can be different from each other. All the asymptotic notations will be written freely either in Landau's or in Vinogradov's style. 
\section{Proof of theorem \ref{thm1}}
\noindent By analytic continuation of the Cahen-Mellin's formula one has for a complex number $z$ with real part $\mathfrak{R}(z) > 0$ and argument $\left|\mathrm{Arg}(z)\right| < \frac{\pi}{2}$
\[\frac{1}{2\pi i}\int_{2-i\infty}^{2+i\infty}\Gamma(s)\zeta^\beta(s)\frac{ds}{z^s} = \sum_{n=1}^{+\infty}d_\beta(n) \frac{1}{2\pi i}\int_{2-i\infty}^{2+i\infty}\Gamma(s)\frac{ds}{(nz)^s} = \sum_{n=0}^{+\infty}d_\beta(n)e^{-nz},\]
where 
\[d_\beta(n) = \sum_{n = n_1 n_2 \dots n_\beta}1.\]
The function $\Gamma(s)\zeta^\beta(s) \frac{1}{z^s}$ has a pole at $s=1$. We define
\[\Psi_\beta(z) = \mathsf{Res}_{s=1} \left ( \Gamma(s)\zeta^\beta(s)\frac{1}{z^s} \right )\]
and
\[\Phi_\beta(z) = \sum_{n=0}^{+\infty}d_\beta(n)e^{-nz} - \Psi_\beta(z).\]
Residue calculation gives us 
\[\Psi_\beta(z) = \frac{1}{z}\left \{ \lambda_0 - \lambda_1\log(z) + \dots + (-1)^{\beta-1}\lambda_{\beta-1}\frac{\log^{\beta-1}(z)}{(\beta-1)!} \right \},\]
where $\lambda_0,\dots,\lambda_{\beta-1}$ are the coefficients in the series representation near the pole $s=1$ of
\[\Gamma(s)\zeta^\beta(s) = \sum_{n=0}^{\beta-1} \frac{\lambda_n}{(s-1)^{n+1}} + \sum_{n=0}^{+\infty}a_n(s-1)^n.\]
Indeed, one can write
\[\frac{1}{z^s} = \frac{1}{z}\exp \left(-(s-1)\log z\right) = \frac{1}{z}\sum_{n=0}^{+\infty}\frac{(-1)^n}{n!}(s-1)^n \log^n z,\]
from which the residue calculation follows easily. Cauchy's integral theorem along the segments $[2-iT,2+iT,\frac{1}{2}+iT,\frac{1}{2}-iT]$ gives us 
\begin{multline*}
2\pi i \Psi_\beta(z) = \\
\int_{2-iT}^{2+iT}\Gamma(s)\zeta^\beta(s) \frac{ds}{z^s} + \int_{2+iT}^{\frac{1}{2}+iT}\Gamma(s)\zeta^\beta(s) \frac{ds}{z^s}  + \int_{\frac{1}{2}+iT}^{\frac{1}{2}-iT}\Gamma(s)\zeta^\beta(s) \frac{ds}{z^s}+ \int_{\frac{1}{2}-iT}^{2-iT}\Gamma(s)\zeta^\beta(s) \frac{ds}{z^s}. 
\end{multline*}
One has 
\[\int_{2+iT}^{\frac{1}{2}+iT}\Gamma(s)\zeta^\beta(s) \frac{ds}{z^s} = \int_2^{\frac{1}{2}}\Gamma(\sigma+iT)\zeta^\beta(\sigma+iT) \frac{d\sigma}{z^{\sigma+iT}},\]
where $z^{\sigma+iT} = \exp((\sigma+iT)\log(z))$ and $\log z$ takes the principal branch. By using the Stirling's formula
\[\Gamma(s) = \sqrt{2\pi}\abs{t}^{\sigma-1/2}e^{-\pi\abs{t}/2}\left (1 + O \left (\abs{t}^{-1} \right ) \right ), \quad \abs{t} \geqslant T_0 \]
and the trivial estimation (\cite[Thm. 1.9]{Iv85})
\[ \zeta(\sigma+it) \ll \begin{cases} \log t, \quad 1 \leqslant \sigma \leqslant 2 \\ t^{\frac{ 1-\sigma} {2}}\log t, \quad 0 \leqslant \sigma \leqslant 1 \end{cases} \]
uniformly in $\sigma$ for $t \geqslant T_0^{\prime}$, we see that
\[\lim_{T \to +\infty}\int_{2+iT}^{\frac{1}{2}+iT}\Gamma(s)\zeta^\beta(s) \frac{ds}{z^s} = 0.\]
By using $\zeta(s) = \overline{\zeta(\bar{s})}$ we also see that
\[\lim_{T \to +\infty}\int_{\frac{1}{2}-iT}^{2-iT}\Gamma(s)\zeta^\beta(s) \frac{ds}{z^s} = 0.\]
Hence
\[\Phi_\beta(z) = \int_{\frac{1}{2}-i\infty}^{\frac{1}{2}+i\infty}\Gamma(s)\zeta^\beta(s)\frac{ds}{z^s}.\]
Now we set $z = xe^{i\left (\frac{\pi}{2} - \delta \right )}$ for $x > 0$ and $0 < \delta < \frac{\pi}{2}$. Parseval's theorem tells us
\[\frac{1}{2\pi}\int_{-\infty}^{+\infty}\abs{\Gamma \left ( \frac{1}{2}+it \right )\zeta^\beta \left (\frac{1}{2}+it \right )}^2e^{(\pi-2\delta)t}dt = \int_0^{+\infty}\abs{\Phi_\beta \left (xe^{i\left( \frac{\pi}{2}-\delta \right)} \right)}^2dx.\]
By using the Stirling's formula and the trivial estimation of the Riemann zeta function as above, there must exist $t_0 = \max(T_0,T_0^{\prime}) > 0$, such that
\begin{multline*}
\int_{-\infty}^0\abs{\Gamma \left ( \frac{1}{2}+it \right )\zeta^\beta \left (\frac{1}{2}+it \right )}^2e^{(\pi-2\delta)t}dt = \int_{-\infty}^{-t_0}\abs{\Gamma \left ( \frac{1}{2}+it \right )\zeta^\beta \left (\frac{1}{2}+it \right )}^2e^{(\pi-2\delta)t}dt \\ + \int_{-t_0}^0\abs{\Gamma \left ( \frac{1}{2}+it \right )\zeta^\beta \left (\frac{1}{2}+it \right )}^2e^{(\pi-2\delta)t}dt \\ = O_{\beta,\varepsilon}(1) +   O \left ( \int_{-\infty}^{-t_0}e^{2(\pi-\delta)t}\abs{t}^{\frac{\beta}{2}} \log^{2\beta} \abs{t} \left ( 1 + O \left (\abs{t}^{-1} \right ) \right ) dt \right ) = O_{\beta,\varepsilon}(1).
\end{multline*}
So we must have
\[J_\beta(2\delta) = \int_0^{+\infty}\abs{\zeta \left (\frac{1}{2} + it \right )}^{2\beta}e^{-2\delta t}dt = \int_0^{+\infty}\abs{\Phi_\beta \left ( xe^{i\left ( \frac{\pi}{2} - \delta \right )} \right )}^2dx + O_{\beta,\varepsilon}(1).\]
We decompose
\[\int_0^{+\infty}\abs{\Phi_\beta \left ( xe^{i\left ( \frac{\pi}{2} - \delta \right )} \right )}^2dx = \int_1^{+\infty}\abs{\Phi_\beta \left ( xe^{i\left ( \frac{\pi}{2} - \delta \right )} \right )}^2dx + \int_0^1\abs{\Phi_\beta \left ( xe^{i\left ( \frac{\pi}{2} - \delta \right )} \right )}^2dx = J_1 + J_2.\]
The estimation of $J_1$ is well-known to the experts, but we include the arguments here for the sake of completeness. By applying the Cauchy's inequality we obtain
\begin{multline*}
J_1 \leqslant \int_1^{+\infty}\abs{\sum_{n=0}^{+\infty}d_\beta(n)e^{-nxe^{i\left (\frac{\pi}{2}-\delta \right )}}}^2dx + \int_1^{+\infty}\abs{\Psi_\beta \left (x e^{i \left (\frac{\pi}{2}-\delta \right )} \right )}^2dx
\\ + O \left ( \left (\int_1^{+\infty}\abs{\sum_{n=0}^{+\infty}d_\beta(n)e^{-nxe^{i\left (\frac{\pi}{2}-\delta \right )}}}^2dx \right )^{1/2} \left (  \int_1^{+\infty}\abs{\Psi_\beta \left (x e^{i \left (\frac{\pi}{2}-\delta \right )} \right )}^2dx \right )^{1/2} \right ).
\end{multline*}
One has
\[\abs{\Psi_\beta \left (x e^{i \left (\frac{\pi}{2}-\delta \right )} \right )} \leqslant \frac{1}{x}\sum_{m=1}^{\beta}\frac{\abs{\lambda_m}}{(m-1)!}\left(\log x + \frac{\pi}{2}\right)^{m-1}.\]
From \cite[Lem. 3]{Tit28} there exists a positive constant $A_1 > 0$ such that
\[\abs{\Psi_\beta \left (x e^{i \left (\frac{\pi}{2}-\delta \right )} \right )} \leqslant \beta\exp(A_1 \beta \log \beta)\frac{\left(\log x + \frac{\pi}{2} \right)^{\beta-1}}{x}.\]
Thus
\[\int_1^{+\infty}\abs{\Psi_\beta \left (x e^{i \left (\frac{\pi}{2}-\delta \right )} \right )}^2dx = O_\beta\left(\int_1^{+\infty}\frac{\log\left(x + \frac{\pi}{2} \right)^{2(\beta-1)}}{x^2}dx \right) = O_{\beta,\varepsilon}(1).\]
Now we compute
\begin{multline*}
\int_1^{+\infty}\abs{\sum_{n=0}^{+\infty}d_\beta(n)e^{-nxe^{i\left (\frac{\pi}{2}-\delta \right )}}}^2dx = \int_1^{+\infty}\sum_{n=1}^{+\infty}\sum_{m=1}^{+\infty}d_\beta(n)d_\beta(m)\exp\left (inxe^{i\delta} - imxe^{-i\delta} \right )dx \\ = \sum_{n=1}^{+\infty}\sum_{m=1}^{+\infty}d_\beta(n)d_\beta(m)\int_1^{+\infty}\exp \left (inxe^{i\delta} - imxe^{-i\delta} \right ) = \sum_{n=1}^{+\infty}d_\beta^2(n)\int_1^{+\infty}e^{-2nx\sin \delta}dx \\ + \sum_{n \neq m}d_\beta(n)d_\beta(m)\int_1^{+\infty}\exp \left ( -(n+m)x\sin \delta + i(n-m)x \cos \delta \right ) dx \\ = \Sigma_1 + \Sigma_2.
\end{multline*}
One has 
\[\Sigma_1 = \frac{1}{2\sin \delta}\sum_{n=1}^{+\infty}\frac{d^2_\beta(n)}{n}e^{-2n\sin \delta} \sim \frac{C_\beta}{\delta}\log^{\beta^2}\left(\frac{1}{\delta}\right), \quad \delta \to 0^+.\]
Indeed, one introduces the function $F_\beta(s) = \sum_{n=1}^{+\infty}\frac{d_\beta^2(n)}{n^s}$ for $\sigma > 1$. Then one can write $F_\beta(s) = \zeta^{\beta^2}(s)g(s)$, where $g(s)$ is an analytic function in $\sigma > \frac{1}{2}$ (cf. \cite[p. 174]{Tit86}). It means
\[\sum_{n=1}^{+\infty}\frac{d^2_\beta(n)}{n}e^{-2n\sin \delta} = \frac{1}{2\pi i}\int_{2-i\infty}^{2+i\infty}\Gamma(s)F_\beta(s+1)\frac{ds}{(2\sin \delta)^s},\] 
where the function $\Gamma(s)F_\beta(s+1)$ has a pole of order $\beta^2+1$ at $s=0$. Residue calculation and moving countour integral give us then the asymptotic formula of $\Sigma_1$. To estimate $\Sigma_2$ we will follow the idea in \cite[proof of Lem. 5]{Tit28}. One has for $n>m$
\begin{multline*}
\abs{\int_1^{+\infty}\exp\left(-(n+m)x\sin \delta + i(n-m)x\cos \delta \right) dx} \\ = \abs{\int_0^{+\infty}\exp \left(-(n+m)(1+iy)\sin \delta + i(n-m)(1+iy)\cos \delta \right ) dy} \\  < e^{-(n+m)\sin \delta}\int_0^{+\infty}e^{-(n-m)y \cos \delta}dy  < \frac{e^{-n\sin \delta}}{(n-m)\cos \delta}.
\end{multline*}
By using the symmetry between the indices $n$ and $m$ in $\Sigma_2$ we have consequently
\begin{multline*}
\Sigma_2 \ll \frac{1}{\cos \delta}\sum_{n=2}^{+\infty}\sum_{m=1}^{n-1}d_\beta(n)d_\beta(m)\frac{1}{(n-m)}e^{-n\sin \delta} = \frac{1}{\cos \delta}\sum_{k=1}^{+\infty}\sum_{n=k+1}^{+\infty}d_\beta(n)d_\beta(n-k)\frac{e^{-n\sin\delta}}{k} \\ = \frac{1}{\cos \delta}\sum_{k=1}^{+\infty}\frac{e^{-\frac{1}{2}k\sin \delta}}{k}\sum_{n=k+1}^{+\infty}d_\beta(n)e^{-\frac{1}{2}n\sin \delta}d_\beta(n-k)e^{-\frac{1}{2}(n-k)\sin \delta}.
\end{multline*}
The Cauchy's inequality yields
\begin{multline*}
\sum_{n=k+1}^{+\infty}d_\beta(n)e^{-\frac{1}{2}n\sin \delta}d_\beta(n-k)e^{-\frac{1}{2}(n-k)\sin \delta} \leqslant \\ \left (\sum_{n=k+1}^{+\infty}d_\beta^2(n)e^{-n\sin \delta}\sum_{n=k+1}^{+\infty}d_\beta^2(n-k)e^{-(n-k)\sin \delta} \right )^{\frac{1}{2}} < \sum_{n=1}^{+\infty}d_\beta^2(n)e^{-n\sin \delta}.
\end{multline*}
One has
\[\sum_{k=1}^{+\infty}\frac{e^{-\frac{1}{2}k\sin^2 \delta}}{k} = \log \frac{1}{1-e^{-\frac{1}{2}\sin \delta}} \sim \log \frac{2}{\delta} \ll_\varepsilon \frac{1}{\delta^\varepsilon}, \quad \forall \varepsilon > 0, \delta \to 0.\]
Now one has (cf. \cite[Chap. VII, p. 174]{Tit86})
\[\sum_{n=1}^{+\infty}d^2_\beta(n)e^{-n\sin \delta} \sim \frac{C_\beta}{\delta}\log^{\beta^2-1}\frac{1}{\delta}, \quad \delta \to 0^+, \forall \varepsilon > 0.\]
Indeed, by analytic continuation of the Cahen-Mellin's formula for $\mathfrak{R}(z) > 0$ and $\left|\mathrm{Arg}(z) < \frac{\pi}{2} \right|$ one has
\[\sum_{n=1}d_\beta^2(n)e^{-nz} = \frac{1}{2\pi i}\int_{2-i\infty}^{2+i\infty}\Gamma(s)F_\beta(s)\frac{ds}{z^s}.\]
The function $F_\beta(s)$ has a pole at $s=1$. By writing
\[\frac{1}{z^s} = \frac{1}{z}\exp\left(-(s-1)\log z \right) = \frac{1}{z}\sum_{n=0}^{+\infty}\frac{(-1)^n}{n!}(s-1)^n\log^n z,\]
one gets the asymptotic formula above by residue calculation and Stirling's formula. This means
\[\Sigma_2 \ll_\beta \frac{1}{\delta}\log^{\beta^2-1}\left(\frac{1}{\delta}\right), \quad \delta \to 0^+.  \]
So we obtain the following result, which is well-known to the experts:
\[J_1 \sim \frac{C_\beta}{\delta}\log^{\beta^2}\left(\frac{1}{\delta}\right), \quad \delta \to 0^+, \forall \varepsilon > 0, \forall \beta \geqslant 2.\]
Now it remains to estimate the integral
\begin{multline*}J_2 = \int_0^1\abs{\Phi_\beta\left (xe^{i\left (\frac{\pi}{2} - \delta \right )} \right )}^2dx =
\int_0^{\delta^{\frac{\beta-1}{2}}}\abs{\Phi_\beta\left (xe^{i\left (\frac{\pi}{2} - \delta \right )} \right )}^2dx + \int_{\delta^{\frac{\beta-1}{2}}}^1\abs{\Phi_\beta\left (xe^{i\left (\frac{\pi}{2} - \delta \right )} \right )}^2dx \\ = J_3 + J_4
\end{multline*}
By changing variable $x = \frac{1}{y}$ we have
\[J_3 = \int_{\frac{1}{\delta^{\frac{\beta-1}{2}}}}^{+\infty}\abs{\Phi_\beta \left (\frac{e^{i \left (\frac{\pi}{2} - \delta \right )}}{y} \right )}^2 \frac{dy}{y^2}.\]
The estimation of $J_3$ is well-known. However we need to spell out a few words in details. We consider the function
\[\Phi_\beta \left(\frac{1}{\bar{z}}\right) = \frac{1}{2\pi i}\int_{\alpha-i\infty}^{\alpha+i\infty}\Gamma(s)\zeta^{\beta}(s)\bar{z}^s ds, \quad 0 < \alpha < 1.\]
By the Cauchy's integration theorem we have
\[\mathsf{Res}_{s=0}\left (\Gamma(s)\zeta^{\beta}(s)\bar{z}^s \right ) = \frac{1}{2\pi i}\oint_{D}\Gamma(s)\zeta^\beta(s)\bar{z}^sds,\]
where $D$ is the rectangle $[\alpha-iT,\alpha+iT,-\alpha+iT,-\alpha-iT]$, since $\Gamma(s)$ has a simple pole at $s=0$. As before, by using the Stirling's formula, we see that the integrals along the horizontal lines of $D$ vanish, when $T \to +\infty$. Residue calculation gives us
\[\mathsf{Res}_{s=0}\left (\Gamma(s)\zeta^{\beta}(s)\bar{z}^s \right ) = O_{\beta}(1).\]
One has then
\[\Phi_\beta\left (\frac{1}{\bar{z}} \right ) = \frac{1}{2\pi i}\int_{-\alpha-i\infty}^{-\alpha+i\infty}\Gamma(s)\zeta^\beta(s)\bar{z}^sds + O_\beta(1).\] 
After changing the variable $s \to 1-s$ one has
\[\Phi_\beta \left (\frac{1}{\bar{z}} \right ) = \frac{\bar{z}}{2\pi i}\int_{1+\alpha-i\infty}^{1+\alpha + i\infty}\Gamma(1-s)\zeta^\beta(1-s) \frac{ds}{\bar{z}^s} + O_\beta(1).\]
By using the functional equation of the Riemann zeta function (cf. \cite[p. 9]{Iv85})
\[\zeta(s) = \chi(s)\zeta(1-s), \quad \chi(s) = (2\pi)^s/(2\Gamma(s)\cos(\pi s/2))\]
one obtains
\[\Phi_\beta \left (\frac{1}{\bar{z}} \right ) = \frac{\bar{z}}{2\pi i}\int_{1+\alpha - i\infty}^{1 + \alpha + i\infty} \frac{\Gamma(1-s)}{\chi^\beta (s)}\zeta^\beta(s) \frac{ds}{\bar{z}^s} + O_\beta(1).\]
The Euler's functional equation
\[\Gamma(s) \Gamma(1-s) = \frac{\pi}{\sin \pi s}\]
gives us
\[\Phi_\beta \left ( \frac{1}{\bar{z}} \right ) = \frac{\bar{z}}{2\pi i}\int_{1+\alpha-i\infty}^{1+\alpha+i\infty}2^{\beta(1-s)}\pi^{1-\beta s}\frac{\cos^\beta \left (\frac{\pi s}{2} \right )}{\sin \pi s}\Gamma^{\beta-1}(s)\zeta^\beta(s) \frac{ds}{\bar{z}^s} + O_\beta(1).\]
On the vertical line $\sigma = 1+\alpha$ we have $\zeta^\beta(s) = \sum_{n=1}^{+\infty}\frac{d_\beta(n)}{n^s}$, so we can rewrite
\[\Phi_\beta \left ( \frac{1}{\bar{z}} \right ) = \frac{\bar{z}}{2\pi i}\sum_{n=1}^{+\infty}d_\beta(n)\int_{1+\alpha-i\infty}^{1+\alpha+i\infty}2^{\beta(1-s)}\pi^{1-\beta s}\frac{\cos^\beta \left (\frac{\pi s}{2} \right )}{\sin \pi s}\Gamma^{\beta-1}(s)\frac{ds}{(n\bar{z})^s} + O_\beta(1).\]
We consider the integral
\[I = \int_{1+\alpha-i\infty}^{1+\alpha+i\infty}2^{\beta(1-s)}\pi^{1-\beta s}\frac{\cos^\beta \left (\frac{\pi s}{2} \right )}{\sin \pi s}\Gamma^{\beta-1}(s)\frac{ds}{(n\bar{z})^s}.\]
By the Stirling's formula we have $\exists t_1 > 0$, such that $\forall \abs{t} \geqslant t_1$
\[\abs{\Gamma\left(1+\alpha+it\right)}^{\beta-1} \sim (2\pi)^{\frac{\beta-1}{2}}\abs{t}^{\left(\frac{1}{2}+\alpha\right)(\beta-1)}e^{-\frac{\pi}{2}\abs{t}(\beta-1)}(1+O(\abs{t}^{-1}))^{\beta-1}.\]
Again the Stirling's formula shows that $\exists t_2 > 0$, such that $\forall \abs{t} \geqslant t_2$ and for $s = 1+\alpha+it$
\[\abs{\Gamma\left((\beta-1)s-\frac{1}{2}\beta+1 \right)} \sim \sqrt{2\pi}((\beta-1)\abs{t})^{\left(\frac{1}{2}+\alpha\right)(\beta-1)}e^{-\frac{\pi}{2}\abs{t}(\beta-1)}(1+O(\abs{t}^{-1})).\]
So we must have
\[\Gamma^{\beta-1} \left( s \right) \ll_{\beta} \Gamma\left((\beta-1)s-\frac{1}{2}\beta+1 \right).\]
So $\exists t_3 = \max(t_1,t_2)$, such that 
\begin{multline*}
I = \left(\int_{1+\alpha-i\infty}^{1+\alpha-it_3} + \int_{1+\alpha-it_3}^{1+\alpha+it_3} + \int_{1+\alpha+it_3}^{1+\alpha+i\infty} \right)2^{\beta(1-s)}\pi^{1-\beta s}\frac{\cos^\beta \left (\frac{\pi s}{2} \right )}{\sin \pi s}\Gamma^{\beta-1}(s)\frac{ds}{(n\bar{z})^s} \\ \ll_{\beta,\varepsilon} O_{\beta,\varepsilon}\left ( \frac{1}{(ny)^{1+\alpha}} \right ) + \int_{1+\alpha-i\infty}^{1+\alpha+i\infty}2^{\beta(1-s)}\pi^{1-\beta s}\frac{\cos^\beta \left (\frac{\pi s}{2} \right )}{\sin \pi s}\Gamma\left((\beta-1)s -\frac{1}{2}\beta +1 \right)\frac{ds}{(n\bar{z})^s} \\ = O_{\beta,\varepsilon}\left( \frac{1}{(ny)^{1+\alpha}} \right) + \int_{1+\alpha-i\infty}^{1+\alpha+i\infty}2^{\beta(1-s)-1}\pi^{1-\beta s}\frac{\cos^{\beta-1}\left(\frac{\pi s}{2} \right)}{\sin \left ( \frac{\pi s}{2} \right )}\Gamma\left((\beta-1)s -\frac{1}{2}\beta +1 \right)\frac{ds}{(n\bar{z})^s}.
\end{multline*}
In the upper-half plane $t \geqslant 0$ we have
\[\frac{\cos^{\beta-1}\left(\frac{\pi s}{2} \right)}{\sin \left (\frac{\pi s}{2} \right )} = \frac{\frac{1}{2^{\beta-1}} \left (e^{\frac{i \pi s}{2}} + e^{\frac{-i\pi s}{2}} \right )^{\beta-1}}{\frac{1}{2i} \left ( e^{\frac{i \pi s}{2}} - e^{\frac{-i\pi s}{2}} \right )} \sim -2^{2-\beta}ie^{-i(\beta-2)\frac{\pi s}{2}}, \quad t \to +\infty\]
and for $t < 0$ we have
\[\frac{\cos^{\beta-1}\left(\frac{\pi s}{2} \right)}{\sin \left (\frac{\pi s}{2} \right )} = \frac{\frac{1}{2^{\beta-1}} \left (e^{\frac{i \pi s}{2}} + e^{\frac{-i\pi s}{2}} \right )^{\beta-1}}{\frac{1}{2i} \left ( e^{\frac{i \pi s}{2}} - e^{\frac{-i\pi s}{2}} \right )} \sim 2^{2-\beta}ie^{i(\beta-2)\frac{\pi s}{2}}, \quad t \to -\infty.\]
Now for $z = ye^{i \left (\frac{\pi}{2}-\delta \right )}$ with $0 < \delta < \frac{\pi}{2}$
\begin{multline*}
\int_{1+\alpha-i\infty}^{1+\alpha}e^{-i(\beta-2)\frac{\pi s}{2}}\Gamma\left((\beta-1)s - \frac{1}{2}\beta + 1 \right)\frac{ds}{(n\bar{z})^s} = \\ i\int_{-\infty}^0e^{-i\frac{\pi}{2}(1+\alpha)(\beta-2)}e^{\frac{\pi}{2}t(\beta-2)}\Gamma\left((\beta-1)\left(1+\alpha+it \right)-\frac{1}{2}\beta + 1 \right)\frac{dt}{(n\bar{z})^{1+\alpha+it}} \\ \ll_{\beta,\varepsilon} \frac{1}{(ny)^{1+\alpha}}\int_{-\infty}^0\abs{t}^{\left(\frac{1}{2}+\alpha\right)(\beta-1)}e^{\frac{\pi}{2}t(\beta-1)}e^{\frac{\pi}{2}t(\beta-1)}e^{t\left(\delta - \frac{\pi}{2} \right )}dt = O_{\beta,\varepsilon}\left(\frac{1}{(ny)^{1+\alpha}}\right).
\end{multline*}
Similarly
\[\int_{1+\alpha}^{1+\alpha+i\infty}e^{i(\beta-2)\frac{\pi s}{2}}\Gamma\left((\beta-1)s - \frac{1}{2}\beta + 1 \right)\frac{ds}{(n\bar{z})^s} \ll_{\beta,\varepsilon} \frac{1}{(ny)^{1+\alpha}}.\]
So we obtain
\begin{multline*}
I \ll_{\beta,\varepsilon} O_{\beta,\varepsilon}\left(\frac{1}{(ny)^{1+\alpha}} \right) \\ + \int_{1+\alpha-i\infty}^{1+\alpha+i\infty}(2\pi)^{1-\beta s}\left(e^{i(\beta-2)\frac{\pi s}{2}} + e^{-i(\beta-2)\frac{\pi s}{2}} \right)\Gamma\left((\beta-1)s -\frac{1}{2}\beta +1 \right)\frac{ds}{(n\bar{z})^s}.
\end{multline*}
We set $(\beta-1)s-\frac{1}{2}\beta+1 = w$, the integral becomes
\begin{multline*}
\int_{1+\alpha-i\infty}^{1+\alpha+i\infty}(2\pi)^{1-\beta s}\left(e^{i(\beta-2)\frac{\pi s}{2}} + e^{-i(\beta-2)\frac{\pi s}{2}} \right)\Gamma\left((\beta-1)s -\frac{1}{2}\beta +1 \right)\frac{ds}{(n\bar{z})^s} \\ = \frac{1}{\beta-1}\int_{(\beta-1)(1+\alpha)-\beta/2+1-i\infty}^{(\beta-1)(1+\alpha)-\beta/2+1+i\infty}(2\pi)^{1-\frac{\beta}{\beta-1}\left(w + \frac{1}{2}\beta -1\right)}\Gamma(w) \times \\ \times \left (e^{\frac{i \pi(\beta-2)(w+\beta/2-1)}{2(\beta-1)}} + e^{-\frac{i \pi(\beta-2)(w+\beta/2-1)}{2(\beta-1)}} \right)\frac{dw}{(n\bar{z})^{\frac{w+\beta/2-1}{\beta-1}}}.
\end{multline*}
We can write
\begin{multline*}
\int_{(\beta-1)(1+\alpha)-\beta/2+1-i\infty}^{(\beta-1)(1+\alpha)-\beta/2+1+i\infty}(2\pi)^{1-\frac{\beta}{\beta-1}\left(w + \frac{1}{2}\beta -1\right)}\Gamma(w)e^{\frac{i \pi(\beta-2)(w+\beta/2-1)}{2(\beta-1)}}\frac{dw}{(n\bar{z})^{\frac{w+\beta/2-1}{\beta-1}}} \\ = \frac{2\pi}{\left( (2\pi)^\beta n \bar{z}e^{-i(\beta-2)\frac{\pi}{2}} \right)^{\frac{\beta/2-1}{\beta-1}}} \int_{(\beta-1)(1+\alpha)-\beta/2+1-i\infty}^{(\beta-1)(1+\alpha)-\beta/2+1+i\infty}\frac{\Gamma(w)dw}{\left(\left((2\pi)^\beta n \bar{z}e^{-i(\beta-2)\frac{\pi}{2}} \right)^{\frac{1}{\beta-1}}\right)^w}.
\end{multline*}
By Cahen-Mellin's formula we have
\[\int_{(\beta-1)(1+\alpha)-\beta/2+1-i\infty}^{(\beta-1)(1+\alpha)-\beta/2+1+i\infty}\frac{\Gamma(w)dw}{\left(\left((2\pi)^\beta n \bar{z}e^{-i(\beta-2)\frac{\pi}{2}} \right)^{\frac{1}{\beta-1}}\right)^w} = \exp \left(- \left ((2\pi)^\beta n \bar{z}e^{-i(\beta-2)\frac{\pi}{2}} \right )^{\frac{1}{\beta-1}} \right).\]
Similary
\begin{multline*}
\int_{(\beta-1)(1+\alpha)-\beta/2+1-i\infty}^{(\beta-1)(1+\alpha)-\beta/2+1+i\infty}(2\pi)^{1-\frac{\beta}{\beta-1}\left(w + \frac{1}{2}\beta -1\right)}\Gamma(w)e^{-\frac{i \pi(\beta-2)(w+\beta/2-1)}{2(\beta-1)}}\frac{dw}{(n\bar{z})^{\frac{w+\beta/2-1}{\beta-1}}} \\ = \frac{2\pi}{\left( (2\pi)^\beta n \bar{z}e^{i(\beta-2)\frac{\pi}{2}} \right)^{\frac{\beta/2-1}{\beta-1}}}\exp \left(- \left ((2\pi)^\beta n \bar{z}e^{i(\beta-2)\frac{\pi}{2}} \right )^{\frac{1}{\beta-1}} \right).
\end{multline*}
Thus we have for $z = ye^{i\left(\frac{\pi}{2}-\delta\right)}$
\begin{multline*}
I = O_{\beta,\varepsilon}\left(\frac{1}{(ny)^{1+\alpha}} \right)+\frac{2\pi\exp \left(-\left((2\pi)^\beta  n ye^{i\left (\delta + \frac{\pi}{2} - \frac{\pi \beta}{2} \right )}  \right)^{\frac{1}{\beta-1}} \right)}{(\beta-1)\left((2\pi)^\beta n y e^{i \left(\delta + \frac{\pi}{2} - \frac{\pi \beta}{2} \right)} \right)^{\frac{\beta/2-1}{\beta-1}}} \\ + \frac{2\pi \exp \left (-\left ( (2\pi)^\beta n y e^{i\left ( \delta - \frac{3\pi}{2} + \frac{\pi\beta}{2} \right )} \right)^{\frac{1}{\beta-1}} \right)}{(\beta-1)\left((2\pi)^\beta n y e^{i\left (\delta - \frac{3\pi}{2} + \frac{\pi \beta}{2} \right )} \right)^{\frac{\beta/2-1}{\beta-1}}}.
\end{multline*}
On the other hand
\[e^{i\frac{\delta + \frac{\pi}{2}-\frac{\pi\beta}{2}}{\beta-1}} = \sin\left (\frac{\delta}{\beta-1}\right ) -i \cos \left (\frac{\delta}{\beta-1} \right ) \]
and $\sin \left(\frac{\delta}{\beta-1} \right) > 0$ for $0 < \delta < \frac{\pi}{2}$. Similarly
\[e^{i\frac{\delta - \frac{3\pi}{2} + \frac{\pi \beta}{2}}{\beta-1}} = \sin \left (\frac{\pi-\delta}{\beta-1} \right ) + i\cos \left (\frac{\pi-\delta}{\beta-1} \right) \]
and $\sin \left(\frac{\pi - \delta}{\beta-1} \right) > 0$ for $0 < \delta < \frac{\pi}{2}$ and $\beta \geqslant 2$. Now we can estimate
\begin{multline*}
\Phi_\beta \left(\frac{1}{y}e^{i\left(\delta - \frac{\pi}{2} \right)} \right) \\ \ll_{\beta,\varepsilon} \frac{1}{y^{\alpha}}\sum_{n=1}^{+\infty}\frac{d_\beta(n)}{n^{1+\alpha}} + \frac{1}{y^{\frac{\beta/2-1}{\beta-1}-1}}\sum_{n=1}^{+\infty}d_\beta(n)\frac{\exp\left(i(2\pi)^{\frac{\beta}{\beta-1}}n^{\frac{1}{\beta-1}}y^{\frac{1}{\beta-1}}\exp\left (i\frac{\delta}{\beta-1}\right) \right)}{n^{\frac{\beta/2-1}{\beta-1}}} \\ + \frac{1}{y^{\frac{\beta/2-1}{\beta-1}-1}}\sum_{n=1}^{+\infty}d_\beta(n)\frac{\exp\left(-(2\pi)^{\frac{\beta}{\beta-1}}n^{\frac{1}{\beta-1}}y^{\frac{1}{\beta-1}}\sin \left (\frac{\pi-\delta}{\beta-1}\right) \right)}{n^{\frac{\beta/2-1}{\beta-1}}} = T_1 + T_2 + T_3.
\end{multline*}
The main contributor of $J_3$ comes from the middle term $T_2$ of the estimation above, because the estimation of the mixed terms $\int_{\frac{1}{\delta^{\frac{\beta-1}{2}}}}^{+\infty}T_1T_2\frac{dy}{y^2}$ and $\int_{\frac{1}{\delta^{\frac{\beta-1}{2}}}}^{+\infty}T_2T_3\frac{dy}{y^2}$ follows immediately then by using Cauchy's inequality, as well the terms $\int_{\frac{1}{\delta^{\frac{\beta-1}{2}}}}^{+\infty}T_1 \frac{dy}{y^2}$ and $\int_{\frac{1}{\delta^{\frac{\beta-1}{2}}}}^{+\infty}T_3^2\frac{dy}{y^2}$ are bounded when $\delta \to 0^+$ for $\beta \geqslant 3$. For $\beta = 2$ all terms $T_2$ and $T_3$ are asymptotic equivalent. Now we are allowed to integrate and sum up like in \cite[Chap. VII, p. 172-173]{Tit86} to obtain 
\[J_3 = \int_{\frac{1}{\delta^{\frac{\beta-1}{2}}}}^{+\infty}\abs{\Phi_\beta\left(\frac{1}{y}ie^{-i\delta}\right)}^2\frac{dy}{y^2} \ll \int_{\frac{1}{\delta^{\frac{\beta-1}{2}}}}^{+\infty}\left|T_2\right|^2 \frac{dy}{y^2} = O_{\beta,\varepsilon}\left(\frac{1}{\delta^{\frac{\beta-1}{2}+\varepsilon}}\right).\]
It remains to estimate the integral
\[J_4 = \int_{\delta^{\frac{\beta-1}{2}}}^1 \abs{\Phi_\beta (xie^{-i\delta})}^2dx.\]
Recall, by residue calculation one has for $\mathfrak{R}(z) > 0$ and $\left|\mathrm{Arg}(z)\right| < \frac{\pi}{2} $ (cf. \cite[Chap. VII \S 7.13, p. 160]{Tit86})
\[\Phi_\beta(z) = \sum_{n=1}^{+\infty}d_\beta(n)e^{-nz}+\frac{1}{z}\sum_{n=0}^{\beta-1}a^{(\beta)}_n\log^nz.\]
Cauchy's inequality gives us
\begin{multline*}
J_4 = O\left(\int_{\delta^{\frac{\beta-1}{2}}}^1\left|\sum_{n=1}^{+\infty}d_\beta(n)e^{-nxie^{-i\delta}} \right|^2dx \right) + O\left(\int_{\delta^{\frac{\beta-1}{2}}}^1\left|\sum_{n=0}^{\beta-1}a_n^{(\beta)}\left(\log x + i \left(\frac{\pi}{2}-\delta \right) \right)^n \right|^2 \frac{dx}{x^2} \right) \\ = J_5+J_6.
\end{multline*}
The estimation of $J_6$ is straightforward. Again by Cauchy's inequality
\begin{multline*}
J_6 \leqslant \int_{\delta^{\frac{\beta-1}{2}}}^1\left(\left|a^{(\beta)}_n\right| \left|\log x + i\left(\frac{\pi}{2}-\delta \right)\right|^n \right)^2 \frac{dx}{x^2} \leqslant \beta \int_{\delta^{\frac{\beta-1}{2}}}^1\left|a_n^{(\beta)}\right|^2 \left|\log x + i\left(\frac{\pi}{2}-\delta\right) \right|^{2n}\frac{dx}{x^2} \\ = \beta\sum_{n=0}^{\beta-1}\left|a_n^{(\beta)}\right|\int_1^{\frac{1}{\delta^{\frac{\beta-1}{2}}}}\left(\log^2 y + \left(\frac{\pi}{2}-\delta \right)^2 \right)^n dy = O\left(\frac{1}{\delta^{\frac{\beta-1}{2}+\varepsilon}}\right), \quad \delta \to 0^+, \forall \varepsilon > 0.
\end{multline*}
Let $\alpha \in (0,1)$ be a real number. We have for $z = xie^{-i\delta}$
\begin{multline*}
J_5 = \int_{\delta^{\frac{\beta-1}{2}}}^1 \left|\sum_{n=1}^{+\infty}d_\beta(n)e^{-nz} \right|^2dx \\ =  \int_{\delta^{\frac{\beta-1}{2}}}^1\left|\mathsf{Res}_{s=1}\frac{\Gamma(s)\zeta^\beta(s)}{z^s} + \mathsf{Res}_{s=0}\frac{\Gamma(s)\zeta^\beta(s)}{z^s} -\frac{1}{2\pi i}\int_{-\alpha-i\infty}^{-\alpha+i\infty}\Gamma(s)\zeta^\beta(s)\frac{ds}{z^s} \right|^2dx
\\ \ll \int_{\delta^{\frac{\beta-1}{2}}}^1  \left|\mathsf{Res}_{s=1}\frac{\Gamma(s)\zeta^\beta(s)}{z^s}\right|^2dx + \int_{\delta^{\frac{\beta-1}{2}}}^1  \left|\mathsf{Res}_{s=0}\frac{\Gamma(s)\zeta^\beta(s)}{z^s}\right|^2dx \\ + \int_{\delta^{\frac{\beta-1}{2}}}^1 \left|\frac{1}{2\pi i}\int_{-\alpha-i\infty}^{-\alpha+i\infty}\Gamma(s)\zeta^\beta(s)\frac{ds}{z^s} \right|^2 dx.
\end{multline*}
By residue calculation as before we have
\[\mathsf{Res}_{s=1}\frac{\Gamma(s)\zeta^\beta(s)}{\left(xie^{-i\delta}\right)^s} = \frac{1}{xie^{-i\delta}}P_{\beta-1}\left(\log \frac{1}{xie^{-i\delta}} \right) \sim \frac{C_\beta}{x}\log^{\beta-1}\left(\frac{1}{x} \right),\]
where we denote by $P_{\beta-1}(z)$ a polynomial of degree $\beta-1$ in $z$ and
\[\mathsf{Res}_{s=0}\frac{\Gamma(s)\zeta^\beta(s)}{\left(xie^{-i\delta}\right)^s} = O(1).\]
So
\[\int_{\delta^{\frac{\beta-1}{2}}}^1\left|\mathsf{Res}_{s=1}\frac{\Gamma(s)\zeta^\beta(s)}{\left(xie^{-i\delta} \right)^s}\right|^2 dx \sim C_\beta\int_{\delta^{\frac{\beta-1}{2}}}^1 \log^{\beta-1}\left(\frac{1}{x} \right) \frac{dx}{x^2} = O_{\beta,\varepsilon}\left(\frac{1}{\delta^{\frac{\beta-1}{2}+\varepsilon}}\right)\]
and
\[\int_{\delta^{\frac{\beta-1}{2}}}^1\left|\mathsf{Res}_{s=0}\frac{\Gamma(s)\zeta^\beta(s)}{\left(xie^{-i\delta} \right)^s}\right|^2 dx = O(1).\]
The functional equation $\zeta(s) = \chi(s)\zeta(1-s)$ gives us
\[\int_{-\alpha-i\infty}^{-\alpha+i\infty}\Gamma(s)\zeta^\beta(s)\frac{ds}{z^s} = \sum_{n=1}^{+\infty}d_\beta(n)\int_{-\alpha-i\infty}^{-\alpha+i\infty}\Gamma(s)\chi^\beta(s)\frac{ds}{n^{1-s}z^s}.\]
From Stirling's formula 
\[\Gamma(s) \sim \sqrt{2\pi}s^{s-\frac{1}{2}}e^{-s}, \quad \left|s\right| \to +\infty,\]
one has (see e.g \cite[\S 1.2, p. 9]{Iv85})
\[\chi(s) = \left(\frac{2\pi}{t}\right)^{\sigma+it-\frac{1}{2}}e^{i\left(t+\frac{\pi}{4}\right)}\left(1 + O\left(\frac{1}{t}\right)\right), t \geqslant t_0 > 0.\]
We see for $z = xie^{-i\delta}$
\[\int_{-\alpha-i\infty}^{-\alpha}\Gamma(s)\chi^\beta(s)\frac{ds}{n^{1-s}z^s} = O\left(\frac{1}{n^{1+\alpha}} \right).\]
For $t \geqslant \frac{1}{\delta^{1+\varepsilon}}$ one has
\begin{multline*}
\int_{-\alpha+i\frac{1}{\delta^{1+\varepsilon}}}^{-\alpha+i\infty}\Gamma(s)\chi^\beta(s)\frac{ds}{n^{1-s}z^s} \ll \frac{1}{n^{1+\alpha}}\int_{\frac{1}{\delta^{1+\varepsilon}}}^{+\infty}t^{(\beta-1)(\alpha+1/2)}e^{-\delta t}dt \\ \leqslant \frac{e^{-\frac{1}{2 \delta^\varepsilon}}}{n^{1+\alpha}}\int_{\frac{1}{\delta^{1+\varepsilon}}}^{+\infty}t^{(\beta-1)(\alpha+1/2)}e^{-\frac{1}{2}\delta t}dt = o\left(\frac{1}{n^{1+\alpha}}\right).
\end{multline*}
Now we can write for $t_0 > 0$
\begin{multline*}
\int_{-\alpha}^{-\alpha+i\frac{1}{\delta^{1+\varepsilon}}}\Gamma(s)\chi^\beta(s)\frac{ds}{n^{1-s}z^s} = O\left(\frac{1}{n^{1+\alpha}}\right) \\ + \frac{ix^\alpha}{n^{1+\alpha}}\int_{t_0}^{\frac{1}{\delta^{1+\varepsilon}}}\Gamma(-\alpha+it)\chi^\beta(-\alpha+it)\exp\left\{-it \log n - \left(-\alpha+it\right)\left(x + i\left(\frac{\pi}{2}-\delta \right) \right) \right\}dt \\ \ll_{\alpha}  \frac{1}{n^{1+\alpha}} + \frac{x^\alpha}{n^{1+\alpha}}\int_{t_0}^{\frac{1}{\delta^{1+\varepsilon}}}t^{\left(\beta-1\right)\left(\alpha+\frac{1}{2} \right)}e^{-\delta t}\exp \left\{iF(t)\right\}dt,
\end{multline*}
where
\[F(t) = t\left(\log t - 1 - \beta\log t + \beta\log 2 \pi + \beta - \log n - \log x \right)\]
and the asymptotic formula of $\Gamma(s)$ with $\mathfrak{R}(s) < 0$ follows easily from the reflection formula $\Gamma(s)\Gamma(1-s) = \frac{\pi}{\sin \pi s}$.
Indeed, one has in any fixed strip $\alpha_1 \leqslant -\alpha \leqslant \alpha_2$
\[\Gamma(-\alpha+it) = \sqrt{2\pi}t^{-\alpha+it-\frac{1}{2}}e^{-\frac{\pi}{2}t-it-\frac{1}{2}i \pi\left(\alpha+\frac{1}{2}\right)}\left\{1 + O\left(\frac{1}{t} \right) \right\} \]
and
\[\chi\left(-\alpha+it\right) = \left(\frac{2\pi}{t}\right)^{-\alpha-\frac{1}{2}+it}e^{i\left(t+\frac{\pi}{4}\right)}\left\{1 + O\left(\frac{1}{t} \right) \right\}.\]
As 
\[F^{\prime \prime}(t) = \frac{1-\beta}{t}\]
we can apply \cite[Lem. 2.2]{Iv85} for $G(t) = t^{\left(\beta-1\right)\left(\alpha+\frac{1}{2} \right)}e^{-\delta t}$ by splitting up
\begin{multline*}
\int_{t_0}^{\frac{1}{\delta^{1+\varepsilon}}}G(t)\exp\left\{iF(t)\right\}dt = \\ \int_{t_0}^{\frac{(\beta-1)\left(\alpha+\frac{1}{2}\right)}{\delta}}G(t)\exp\left\{iF(t)\right\}dt + \int_{\frac{(\beta-1)\left(\alpha+\frac{1}{2}\right)}{\delta}}^{\frac{1}{\delta^{1+\varepsilon}}}G(t)\exp\left\{iF(t)\right\}dt \\ \ll_{\beta,\alpha}\frac{1}{\delta^{\left(\beta-1 \right)\left(\alpha + \frac{1}{2} \right)-\frac{1}{2}}}+\frac{1}{\delta^{\left(\beta-1 \right)\left(\alpha + \frac{1}{2} \right) - \frac{1+\varepsilon}{2}}} = O_{\beta,\alpha}\left(\frac{1}{\delta^{\left(\beta-1 \right)\left(\alpha + \frac{1}{2} \right)-\frac{1}{2}}}\right),
\end{multline*}
because $G(t)$ is monotonic positive on each interval above. Putting all this together, we obtain
\[\int_{\delta^{\frac{\beta-1}{2}}}^1 \left|\frac{1}{2\pi i}\int_{-\alpha-i\infty}^{-\alpha+i\infty}\Gamma(s)\zeta^\beta(s)\frac{ds}{z^s} \right|^2 dx \ll_{\beta,\alpha} \frac{1}{\delta^{(\beta-1)(1+2\alpha)-1}}.\]
Now we choose $\alpha = \frac{\varepsilon}{2(\beta-1)}$ and establish the bound
\[J_5 \ll_{\beta,\varepsilon} \max \left\{\frac{1}{\delta^{\frac{\beta-1}{2}+\varepsilon}},\frac{1}{\delta^{\beta-2+\varepsilon}}\right\}, \quad \delta \to 0^+, \forall \varepsilon > 0, \forall \beta \geqslant 2.\]
Replacing $\delta$ by $\delta/2$ in the integral $\int_0^{+\infty}\abs{\Phi_\beta \left (xe^{i\left (\frac{\pi}{2} - \delta \right )} \right )}^2dx$, we obtain also the same estimation for $J_\beta(\delta)$ and this completes the proof of our theorem.
\section*{Acknowledgement}
\noindent I thank Prof. B. Conrey for many helpful discussions.

\end{document}